\newtheorem{thm}{Theorem}[section]
 \newtheorem{cor}[thm]{Corollary}
 \newtheorem{lem}[thm]{Lemma}
 \newtheorem{prop}[thm]{Proposition}
 \theoremstyle{definition}
\newtheorem{ex}[thm]{Example}
\newtheorem{exs}[thm]{Examples}
 \theoremstyle{remark}
 \newtheorem{rem}[thm]{Remark}
 \numberwithin{equation}{section}
\DeclareMathOperator{\Z}{\mathbb{Z}}
\DeclareMathOperator{\F}{\mathbb{F}}
\begin{document}
 \date{\today}

\author[   Ko\c{s}an, Leroy, Matczuk]{    M. Tamer  Ko\c{s}an,  Andr\'e Leroy,  Jerzy Matczuk
\address{Department of Mathematics, Gebze Technical University,\\ 41400 Gebze/Kocaeli,  Turkey}
\email{mtkosan@gtu.edu.tr;  tkosan@gmail.com }
\address{Department of Mathematics, Universit\'e d'Artois, Rue Jean Souvraz, 62307 Lens, France}
\email{andre.leroy@univ-artois.fr}
\address{Department of Mathematics, University of Warsaw, Ul. Banacha, 2, Warszawa, Poland}
\email{jmatczuk@mimuw.edu.pl}}
\title{On $UJ$-rings}
\keywords{Units,  Jacobson radical, clean rings}  \subjclass[2000]{16N20, 16U60}

 \thanks{ This work was done while the second author was visiting 
 the Gebze Technical University and the  University of Warsaw.  He would like to thank these institutions for their warm welcome and TUBITAK of Turkey for the financial support.}

\maketitle

\begin{abstract}

 $UJ$-rings are studied,  i.e.  ring in which all units can be presented in a form $1+x$, for some $x\in J(R)$.  The behavior of $UJ$-rings under various algebraic construction is investigated. In particular, it is  shown that the problem of lifting   the $UJ$ property from a ring $R$ to the polynomial ring $R[x]$ is equivalent to the K\"{o}the's problem for $\F_2$-algebras.

 \end{abstract}

\section*{Introduction}

Throughout the paper all rings considered are associative and unital,   except Section 2 where nil rings naturally appear.
For a ring $R$, the Jacobson radical, the group of units and the set of all nilpotent elements  of $R$ are denoted by $J(R)$, $U(R)$ and $N(R)$, respectively.

The influence on the structure of  rings of properties  defined elementwise is intensively studied in the literature.  For example,  clean rings and their generalizations, rings with special types of units, generalizations of commutative   rings  have been investigated in relation to various global ring properties.

Let us notice that $1+J(R)$ is always contained in $U(R)$, the aim of the paper is to investigate rings in which the equality $U(R)=1 + J(R)$ holds.
A ring $R$ with this property will be called a $UJ$-ring, we will alternatively say that  $R$ has the $UJ$ property.
We  will mainly  focus on the behavior of $UJ$ property under some classical ring constructions.

Recall that $UU$-rings,  defined as rings with $U(R)=1+N(R)$ (i.e. rings with unipotent units) were introduced by C\v{a}lug\v{a}reanu    \cite{Cal} and  studied in details by Danchev and Lam in \cite{DL}.   Of course when $R$ is a $UJ$-ring with nil Jacobson radical, then $R$ is a $UU$-ring.

 Section 1  provides examples and gives some characterizations and basic properties of $UJ$-rings.

The behavior of $UJ$ property under some classical ring constructions is studied in Section 2. In particular,    it is proved (cf. Proposition \ref{necessary condition for R[x] to be UJ}) that if the polynomial ring  $R[x]$ has the $UJ$ property then $R$ is a $UJ$-ring and the Jacobson radical $J(R)$ is nil. Moreover, as Theorem \ref{koethe} shows,  the converse of the above statement is equivalent to the K\"othe's problem  for $\mathbb F_2$-algebras. Theorem \ref{morita} offers a description of Morita contexts which are $UJ$-rings.

The last section is devoted to study of some relations between $UJ$-rings and clean rings. In particular some    characterizations of clean $UJ$-rings are presented.

\section{Preliminaries}

A ring $R$ is said to be a $UJ$-ring if $1+J(R)=U(R)$.   Since units lift modulo
the Jacobson radical, $R$ is a $UJ$-ring if and only if the factor ring $R/J(R)$  is a ring with
trivial units, i.e.  $U(R/J(R))=\{1\}$.

Recall  that $J(R)$ is the largest ideal of $R$ consisting of quasi-regular elements of $R$, i.e. invertible elements in the circle monoid $(R,\circ)$  (see    \cite[Exercises for  \S 4 ]{L}). In the following lemma $\mathcal{C}(R)$ denotes the set of all quasi-regular elements of $R$.
 $(\mathcal{C}(R), \circ )$ is a group isomorphic to $U(R)$ and the isomorphism is given by $\mathcal{C}(R)\ni x \leftrightarrow 1-x\in U(R)$.  Therefore  $R$ is a $UJ$-ring if and only if $\mathcal{C}(R)$ is an ideal of $R$. This description can be used as a definition of $UJ$-rings for rings without unity.

 In the following lemma we collect other characterizations of $UJ$-rings.

\begin{lem} \label{lem char UJ} For a ring $R$,   the following conditions are equivalent:
 \begin{enumerate}
   \item  $U(R)=1+J(R)$, i.e. $R$ is a $UJ$-ring;
   \item $U(R/J(R))=\{1\}$;
   \item $\mathcal{C}(R)$ is an ideal of $R$ (then   $\mathcal{C}(R)=J(R)$);
   \item $rb-cr\in J(R)$, for any $r\in R$ and $b,c\in \mathcal{C}(R)$;
   \item $ru-vr\in J(R)$, for any  $u,v\in U(R)$ and $r\in R$;
   \item $U(R)+ U(R)\subseteq J(R)$ (then U(R)+U(R)=J(R)).

 \end{enumerate}

\begin{proof} The equivalence of $(1)- (3)$ was already observed.   The implication $(3)\Rightarrow (4) $ is trivial.

  Setting $b=1+u, c=1+v$, for $u,v\in U(R)$, and applying $(4)$ we get (5).

        Taking $r=1$ in (5)   we get $u-v\in  J(R)$, for any $u,v\in U(R)$ and $U(R)+ U(R)\subseteq J(R)$ follows. Notice that every $a\in J(R)$ can be written as a sum of two units: $a=1+(a-1)$, so (6) holds.

   Finally, using (6) we get  $U(R)- 1\subseteq  J(R)$, i.e. (1) holds.
\end{proof}
\end{lem}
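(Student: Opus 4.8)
The plan is to prove the six conditions equivalent by a cycle of implications $(1)\Rightarrow(2)\Rightarrow(3)\Rightarrow(4)\Rightarrow(5)\Rightarrow(6)\Rightarrow(1)$, leaning on the already-established equivalence of $(1)$, $(2)$, $(3)$ via the circle-monoid isomorphism $\mathcal{C}(R)\ni x\leftrightarrow 1-x\in U(R)$. Since $(1)\Leftrightarrow(2)\Leftrightarrow(3)$ is noted before the lemma, the substantive work is to weave $(4)$, $(5)$, $(6)$ into the chain, and the natural route is $(3)\Rightarrow(4)\Rightarrow(5)\Rightarrow(6)\Rightarrow(1)$.

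First I would dispatch $(3)\Rightarrow(4)$: if $\mathcal{C}(R)$ is an ideal, then for $r\in R$ and $b,c\in\mathcal{C}(R)$ both $rb$ and $cr$ lie in $\mathcal{C}(R)=J(R)$, so their difference does too — this is immediate from the ideal property. Next, $(4)\Rightarrow(5)$ is a translation: any unit $u$ can be written as $u=1-(-( u-1))$ wait — more cleanly, if $u\in U(R)$ then $1-u\in\mathcal{C}(R)$ since $\mathcal{C}(R)$ is (via the bijection) the image of $U(R)$; so given $u,v\in U(R)$ set $b=1-u$, $c=1-v$ in $\mathcal{C}(R)$, and $(4)$ gives $r(1-u)-(1-v)r\in J(R)$, i.e. $(r-ru)-(r-vr)=vr-ru\in J(R)$, which is $(5)$ up to sign. (The excerpt's proof uses $b=1+u$, which presupposes $-1$ behaves well; the $b=1-u$ normalization is the cleaner one and matches the stated isomorphism $x\leftrightarrow 1-x$, but either works since $J(R)$ is closed under negation.) For $(5)\Rightarrow(6)$, specialize $r=1$ to get $u-v\in J(R)$ for all units $u,v$; since $1\in U(R)$, this yields $U(R)+U(R)\subseteq U(R)-1+1+\cdots$ — more directly, $u+v=u-(-v)$ and $-v$ need not be a unit, so instead note $u-v\in J(R)$ for all $u,v\in U(R)$ gives, with $v$ replaced by $-v$ only if $-1$ is a unit... the safe argument is the one in the excerpt: from $u-v\in J(R)$ we do not directly get sums, so I would instead argue $U(R)+U(R)\subseteq J(R)$ by writing an arbitrary $u+v$ and using that $1+1=2$; hmm. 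The honest clean path: from $(5)$ with $r=1$, $u-v\in J(R)$, so $U(R)-U(R)\subseteq J(R)$; to pass to $U(R)+U(R)$ one observes $-U(R)\subseteq -1+J(R)+?$ — actually simplest is to not overthink and follow the excerpt: show $u+v\in J(R)$ by noting $u+v = u-(1)+(1+v)$ and $1+v\in 1+$(something)?? I will just reproduce the excerpt's clean observation that every $a\in J(R)$ equals $1+(a-1)$, a sum of two units once we know $a-1$ is a unit, which holds precisely when $(1)$ does — so the $\subseteq$ direction and the parenthetical equality in $(6)$ are two halves.

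Given these hesitations, the main obstacle is purely bookkeeping about signs and about the fact that $-1$ is a unit (true in any unital ring), so $U(R)+U(R)=U(R)-(-U(R))=U(R)-U(R)$ and the switch between sums and differences of units is free. Once that is internalized, $(5)\Rightarrow(6)$ is: take $r=1$ in $(5)$ to get $U(R)-U(R)\subseteq J(R)$, hence $U(R)+U(R)\subseteq J(R)$; and for the parenthetical, given $a\in J(R)$ write $a=1+(a-1)$ where $a-1\in U(R)$ because $a-1=-(1-a)$ and $1-a\in U(R)$ as $a$ is quasi-regular — this shows $J(R)\subseteq U(R)+U(R)$, giving equality. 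Finally $(6)\Rightarrow(1)$: from $U(R)+U(R)\subseteq J(R)$, subtract the unit $1$ (equivalently add $-1\in U(R)$) to get $U(R)-1\subseteq J(R)$, i.e. $U(R)\subseteq 1+J(R)$; the reverse containment $1+J(R)\subseteq U(R)$ is the standard fact that $1+J(R)\subseteq U(R)$ holds in every ring, noted in the introduction. This closes the cycle. I expect no genuine difficulty beyond making the sign conventions consistent with the isomorphism $\mathcal{C}(R)\ni x\leftrightarrow 1-x\in U(R)$.
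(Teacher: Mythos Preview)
Your proposal is correct and follows essentially the same route as the paper: the equivalence of $(1)$--$(3)$ is taken as given, then $(3)\Rightarrow(4)\Rightarrow(5)\Rightarrow(6)\Rightarrow(1)$ is established by the same specializations ($b,c$ built from units, then $r=1$, then subtracting $1$). Your version is a bit more explicit than the paper in one place---you spell out that $-1\in U(R)$ is what converts $U(R)-U(R)\subseteq J(R)$ into $U(R)+U(R)\subseteq J(R)$, a step the paper leaves tacit---but the argument is otherwise identical, modulo the cosmetic choice of $b=1-u$ versus $b=1+u$.
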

 Let us now mention a few basic  examples of $UJ$-rings.

\begin{exs}
\begin{enumerate}
\item Any ring with trivial units is $UJ$. In particular, the class of $UJ$-rings contains: all Boolean rings,  all free, both commutative and noncommutative, algebras  over the field $\F_2$.
\item Any local ring $R$ with a maximal ideal $M$ such that $R/M=\mathbb{F}_2$. In particular the rings
$\Z/2^n\Z,\; \Z_{(2\Z)}$ and $ R=\mathbb F_2[[x]]$ are $UJ$.
\item If $R$ is a $UJ$-ring, then ring $T_n(R)$ of $n$ by $n $ upper triangular matrices over  $R$ and $R[x]/(x^n)$  are $UJ$-rings.
\end{enumerate}
\end{exs}
In the following proposition, we collect some basic properties of $UJ$-rings.

\begin{prop} \label{basic}
Let  $R$ be a $UJ$-ring. Then:
\begin{enumerate}

\item  $2\in J(R)$;
\item If $R$ is a division ring, then $R=\mathbb{F}_2$;
\item  $R/J(R)$ is reduced (i.e. it has no nonzero nilpotent elements) and hence abelian (i.e. every idempotent is central);
\item  If $x,y\in R$ are such that $xy\in J(R)$, then $yx\in J(R)$ and  $xRy, yRx\subseteq J(R)$;
\item Let $I\subseteq J(R)$ be an  ideal of $R$.  Then $R$ is a
$UJ$-ring if and only if $R/I$ is a $UJ$-ring;
\item $R$ is Dedekind finite;
\item The ring $\prod_{i\in I} R_i$ is $UJ$ if and only  rings $R_i$ are $UJ$, for all $i\in I$.
\end{enumerate}
\end{prop}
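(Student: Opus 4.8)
The plan is to work throughout in the factor ring $\bar R:=R/J(R)$, which by Lemma~\ref{lem char UJ}(2) has only the trivial unit, and to derive (1)--(7) by reading off facts about $\bar R$ and bootstrapping the later parts from the earlier ones. Parts (1) and (2) are immediate: since $-1\in U(\bar R)=\{1\}$ we get $\bar 2=0$, i.e.\ $2\in J(R)$; and a division ring has $J(R)=0$, so $U(R)=1+J(R)=\{1\}$ forces $R=\{0,1\}=\mathbb F_2$.

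The heart of the argument is (3). First I would note that $\bar R$ has no nonzero square-zero element $a$, since then $1+a\in U(\bar R)$ (with inverse $1-a$) would force $1+a=1$; an arbitrary nonzero nilpotent $a$ with $a^n=0$ and $n\ge 2$ minimal produces the nonzero square-zero element $a^{n-1}$, a contradiction, so $\bar R$ is reduced. That $\bar R$ is then abelian follows from the identities $(er-ere)^2=0=(re-ere)^2$, valid for every idempotent $e$ in any ring: in the reduced ring $\bar R$ they give $er=ere=re$.

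Reducedness of $\bar R$ is then the engine for (4) and (6). For (4): if $xy\in J(R)$ then $(yx)^2=y(xy)x\in J(R)$ because $J(R)$ is a two-sided ideal, so $\overline{yx}$ is nilpotent in the reduced ring $\bar R$, hence $yx\in J(R)$; applying the same squaring to $xry$, namely $(xry)^2=(xr)(yx)(ry)\in J(R)$, yields $xRy\subseteq J(R)$, and symmetrically $yRx\subseteq J(R)$. For (6): from $ab=1$ the element $e:=ba$ is idempotent; in $\bar R$ it is a \emph{central} idempotent by (3), and $\bar a(1-\bar e)=\bar a-\bar a\bar b\bar a=0$ together with centrality gives $1-\bar e=(1-\bar e)\bar a\bar b=0$, so $ba=e\in 1+J(R)=U(R)$, and an idempotent unit must equal $1$, whence $ba=1$. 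Part (5) is formal: $I\subseteq J(R)$ gives $J(R/I)=J(R)/I$ and $(R/I)/J(R/I)\cong R/J(R)$, so Lemma~\ref{lem char UJ}(2) holds for $R$ iff it holds for $R/I$. Part (7) is immediate from $U\!\left(\prod_i R_i\right)=\prod_i U(R_i)$, $J\!\left(\prod_i R_i\right)=\prod_i J(R_i)$ and $1+\prod_i J(R_i)=\prod_i\bigl(1+J(R_i)\bigr)$.

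The one step I expect to need the most care is the passage (3)$\Rightarrow$(4): the point is to recognize that, once $\bar R$ is known to be reduced, the statement ``some positive power of $z$ lies in $J(R)$'' upgrades to ``$z$ itself lies in $J(R)$'', and this is exactly what makes both (4) and (6) go through. Everything else is routine bookkeeping with the characterizations collected in Lemma~\ref{lem char UJ}.
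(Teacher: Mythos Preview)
Your proposal is correct and follows essentially the same route as the paper: work in $\bar R=R/J(R)$ via Lemma~\ref{lem char UJ}(2), get (3) from $U(\bar R)=\{1\}$, and use reducedness of $\bar R$ as the engine for (4) and (6), with (5) and (7) handled formally. The only cosmetic differences are that the paper derives (1) from Lemma~\ref{lem char UJ}(6) (namely $2=1+1\in U(R)+U(R)\subseteq J(R)$) rather than from $-\bar 1=\bar 1$, and for (6) it simply quotes the standard fact that reduced rings are Dedekind finite, whereas you unwind this via the centrality of $\overline{ba}$; your version is more self-contained but not genuinely different.
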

\begin{proof}
Statements (1) and (2),(3) are   direct  consequences of  Lemma \ref{lem char UJ}(6) and  Lemma \ref{lem char UJ}(2), respectively.  (4) follows from (3).

If $I\subseteq J(R)$, then $(R/I)/J(R/I)\simeq R/J(R)$. This gives (5).

  By (3) $R/J(R)$ is reduced, so it is Dedekind finite. Let  $a,b\in R$ be such that $ab=1$. Then, as $R/J(R)$ is Dedekind finite, we get $ba-1\in J(R)$. Thus  the idempotent $ba$ is invertible, so $ba=1$ and (6) follows.

The last statement is a consequence of the facts that $J(\prod_{i\in I} R_i)=\prod_{i\in I} J(R_i)$ and $U(\prod_{i\in I} R_i)=\prod_{i\in I} U(R_i)$
\end{proof}

 Statements (5), (3), (2)   of the above proposition    give immediately the following  characterization of semilocal $UJ$-rings.

  \begin{prop} A semilocal ring $R$ is $UJ$ if  and only if  $R/J(R) \simeq \mathbb{F}_2\times \ldots \times \mathbb{F}_2$.
  \end{prop}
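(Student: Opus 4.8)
The plan is to derive this directly from the properties already assembled, treating $R$ semilocal (i.e. $R/J(R)$ semisimple Artinian) throughout. The only real content is the forward implication, since the reverse follows at once: if $R/J(R)\simeq \mathbb F_2\times\cdots\times\mathbb F_2$ then $U(R/J(R))=\{1\}$, so $R$ is $UJ$ by Lemma~\ref{lem char UJ}(2); note that here we do not even need semilocality for the ``if'' direction.

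For the forward implication, suppose $R$ is a semilocal $UJ$-ring. By Proposition~\ref{basic}(5) (applied with $I=J(R)$, or simply the observation that $(R/J(R))/J(R/J(R))=R/J(R)$) the ring $\bar R:=R/J(R)$ is again a $UJ$-ring, and it is semisimple Artinian with $J(\bar R)=0$. By the Wedderburn--Artin theorem $\bar R\simeq \prod_{i=1}^k M_{n_i}(D_i)$ for division rings $D_i$. Now I would invoke Proposition~\ref{basic}(7): a finite product is $UJ$ iff each factor is, so each $M_{n_i}(D_i)$ is a $UJ$-ring. Next, each $M_{n_i}(D_i)$ has zero Jacobson radical, so being $UJ$ forces $U(M_{n_i}(D_i))=\{1\}$; but a matrix ring $M_n(D)$ with $n\geq 2$ always has nontrivial units (e.g. the elementary matrix $1+E_{12}$, or a nontrivial permutation matrix), hence $n_i=1$ for all $i$. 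Thus $\bar R\simeq\prod_{i=1}^k D_i$ with each $D_i$ a division ring which is $UJ$, and Proposition~\ref{basic}(2) gives $D_i\simeq\mathbb F_2$. Therefore $R/J(R)\simeq\mathbb F_2\times\cdots\times\mathbb F_2$, as desired.

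I do not anticipate a genuine obstacle; the statement is essentially a bookkeeping exercise that packages Proposition~\ref{basic}(2),(5),(7) with Wedderburn--Artin. The one point requiring a sentence of care is the elimination of matrix factors of size $\geq 2$: one must confirm that $M_n(D)$ genuinely has a unit other than the identity for $n\geq 2$, which is immediate since $1+E_{12}$ is a unit with inverse $1-E_{12}$. If one wished to avoid quoting Proposition~\ref{basic}(7), an alternative is to argue directly that $U(\bar R)=\{1\}$ forces $\bar R$ to have no nonzero nilpotents by Proposition~\ref{basic}(3), hence (being semisimple Artinian) to be a finite product of division rings, and then finish as above; but using (7) is cleaner and is exactly what the remark preceding the proposition suggests.
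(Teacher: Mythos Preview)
Your argument is correct and is essentially the paper's approach: the paper simply remarks that the result follows at once from Proposition~\ref{basic}(5),(3),(2) together with Wedderburn--Artin. The only cosmetic difference is that you eliminate matrix factors via Proposition~\ref{basic}(7) and the explicit unit $1+E_{12}$, whereas the paper uses Proposition~\ref{basic}(3) (reducedness of $R/J(R)$) to kill them; you yourself note this alternative, so there is nothing to add.
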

In particular we have
  \begin{cor} The ring $\mathbb{Z}_n=\Z/n\Z$ is $UJ$ if and only if  $n$ is a power of 2.
  \end{cor}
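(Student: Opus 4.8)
The plan is to use the previously established characterization of semilocal $UJ$-rings together with the classical structure of $\mathbb{Z}_n$ and its Jacobson radical. First I would observe that $\mathbb{Z}_n$ is a finite ring, hence artinian, hence semilocal, so the preceding proposition applies: $\mathbb{Z}_n$ is $UJ$ if and only if $\mathbb{Z}_n/J(\mathbb{Z}_n)$ is a finite product of copies of $\mathbb{F}_2$.

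Next I would compute $J(\mathbb{Z}_n)$ and the quotient explicitly. Writing $n=p_1^{a_1}\cdots p_k^{a_k}$ for the prime factorization, the Chinese Remainder Theorem gives $\mathbb{Z}_n\simeq \prod_{i=1}^k \mathbb{Z}_{p_i^{a_i}}$. Since each $\mathbb{Z}_{p^a}$ is local with maximal ideal $p\mathbb{Z}_{p^a}$ and residue field $\mathbb{F}_p$, we get $\mathbb{Z}_n/J(\mathbb{Z}_n)\simeq \prod_{i=1}^k \mathbb{F}_{p_i}$. Alternatively, one can invoke Proposition \ref{basic}(7) directly to reduce to the prime-power case.

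Then the equivalence becomes transparent: $\prod_{i=1}^k \mathbb{F}_{p_i}\simeq \mathbb{F}_2\times\cdots\times\mathbb{F}_2$ precisely when every $p_i=2$, i.e. when $k=1$ and $p_1=2$, which is exactly the condition that $n$ is a power of $2$. For the forward direction, if some $p_i$ is odd then $\mathbb{F}_{p_i}$ has a nontrivial unit (for instance $-1\neq 1$), contradicting triviality of units in the quotient; for the converse, if $n=2^a$ then $\mathbb{Z}_n$ is local with residue field $\mathbb{F}_2$, so it is $UJ$ by the semilocal criterion (or already by the example of local rings with residue field $\mathbb{F}_2$ listed earlier).

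There is essentially no obstacle here — the statement is a direct corollary, and the only thing to be careful about is correctly identifying $J(\mathbb{Z}_n)$ and the residue quotient, which is standard elementary ring theory. The proof is a two-line deduction from the semilocal proposition.
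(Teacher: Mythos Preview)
Your argument is correct and follows exactly the route the paper intends: the corollary is stated immediately after the semilocal characterization as a direct consequence, and your computation of $\mathbb{Z}_n/J(\mathbb{Z}_n)\simeq\prod_i\mathbb{F}_{p_i}$ via the Chinese Remainder Theorem makes that deduction explicit. There is nothing to add.
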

  Let us finish this section with the following:
  \begin{rem}\label{$UJ$ nil elements}
  A ring $R$ is a $UJ$-ring with nil Jacobson radical  if and only if $R$ is a  $UU$-ring and $N(R)$ is an ideal of $R$.
   \end{rem}

 The following  example of Bergman (see  \cite[Example 2.5]{DL})  shows that   $UU$-rings with nil Jacobson radical do not have to be $UJ$.
 \begin{ex}
 Let $R$ be the
$\mathbb{F}_2$-algebra generated by $x,y$ with the only relation $x^2=0$.  Then   $U(R)=1+ \mathbb{F}_2x + xRx$, so $R$  is a $UU$-ring. Moreover $J(R)=0$, so $R$ is not a $UJ$-ring.
 \end{ex}

  \section{UJ property under algebraic constructions}
  The main purpose of this section is to clarify  the  connection between   K\"{o}the's problem  and $UJ$ property of rings. Later on  we present necessary and sufficient conditions for a Morita context   to be a $UJ$-ring.

 It is known and easy to check  (see \cite{DL})  that  a subring of a $UU$-ring is always a $UU$-ring. We will see in the example below that the  $UJ$ property is not hereditary on subrings but anyway we have the following:

\begin{prop} Let   $R$ be a $UJ$-ring and $Z$ a subring of $R$ such that $U(Z)=U(R) \cap Z$.  Then   $Z$ is also a $UJ$-ring. In particular this applies to  $Z=Z(R)$  the center of $R$.
 \begin{proof} Since $U(Z)=U(R)\cap Z$, we also have $ J(R)\cap Z\subseteq J(Z)$. Thus, using
  $U(R)=1+J(R)$ we get     $1+J(Z)\subseteq U(Z) = U(R) \cap Z= ( 1+ J(R) ) \cap Z = 1+ (J(R) \cap Z)\subseteq 1+ J(Z)$ and $U(Z)=1+J(Z)$ follows.
 \end{proof}
\end{prop}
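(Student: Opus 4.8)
The plan is to prove the two inclusions $U(Z)\subseteq 1+J(Z)$ and $1+J(Z)\subseteq U(Z)$ separately; the latter holds in any unital ring (it is the observation, made in the introduction, that $1+J$ always sits inside the unit group), so the whole content is the former. Using the hypothesis $U(Z)=U(R)\cap Z$ together with the fact that $R$ is $UJ$, I would first rewrite $U(Z)=(1+J(R))\cap Z$, and then, since $1\in Z$, as $U(Z)=1+(J(R)\cap Z)$. This reduces the problem to showing $J(R)\cap Z\subseteq J(Z)$.

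For that inclusion the natural tool is the characterization of the Jacobson radical recalled just before Lemma \ref{lem char UJ}: $J(Z)$ is the largest ideal of $Z$ consisting of quasi-regular elements. Now $J(R)\cap Z$ is a two-sided ideal of $Z$, and for each $x\in J(R)\cap Z$ we have $1-x\in U(R)\cap Z=U(Z)$, so $x$ is quasi-regular in $Z$; hence $J(R)\cap Z\subseteq J(Z)$. Feeding this back into the previous display gives $U(Z)=1+(J(R)\cap Z)\subseteq 1+J(Z)\subseteq U(Z)$, so all the containments are equalities and $Z$ is a $UJ$-ring.

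Finally, for the assertion about the center it is enough to check that $Z=Z(R)$ satisfies the hypothesis, i.e. $U(Z(R))=U(R)\cap Z(R)$. One inclusion is immediate; for the other, if $z\in Z(R)$ is a unit of $R$, then conjugating the relation $zr=rz$ by $z^{-1}$ shows that $z^{-1}$ is central as well, so $z\in U(Z(R))$. I do not anticipate a genuine obstacle here: the argument is a short unwinding of definitions, and the only spot that demands a little care is to invoke the correct one-directional fact (an ideal all of whose elements are quasi-regular lies inside the radical) rather than the generally false equality $J(R)\cap Z=J(Z)$ — only the inclusion $J(R)\cap Z\subseteq J(Z)$ is needed, and only it is true at this level of generality.
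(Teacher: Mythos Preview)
Your argument is correct and follows exactly the same route as the paper's proof: both rewrite $U(Z)=(1+J(R))\cap Z=1+(J(R)\cap Z)$ and then use the inclusion $J(R)\cap Z\subseteq J(Z)$ to close the chain $1+J(Z)\subseteq U(Z)\subseteq 1+J(Z)$. You actually supply more detail than the paper does, justifying $J(R)\cap Z\subseteq J(Z)$ via the quasi-regular characterization of the radical and verifying the hypothesis for $Z(R)$, both of which the paper merely asserts.
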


 \begin{ex}
  Let $R=\mathbb{F}_2[[x]]$. Then $R$ is a $UJ$-ring  and its subring $S$   generated by  $1+x$ and
$(1+x)^{-1}=\sum_{i=0}^\infty x^i$ is not a $UJ$-ring, as it is isomorphic to
$\mathbb{F}_2[x,x^{-1}]$.
\end{ex}
Now we will concentrate on the $UJ$ property of polynomial rings.  In this context   let us notice that:
\begin{lem}\label{trivial units}
Let  $R$ be a ring with trivial units. Then $U(R[X])=\{1\}$, where    $R[X]$ denotes the polynomial ring in the  set $X$ of commuting indeterminates.
\begin{proof} Since being a unit in $R[X]$ is a local property, i.e. depends only on finitely many indeterminates, we may assume that $X$ is a finite set.

By assumption $U(R)=\{1\}$, so $R$ does not contain nontrivial nilpotent elements, i.e. it is a reduced ring.   \cite[Corollary 1.7]{KLM} characterizes reduced rings as rings such that $U(R[x])=U(R)$ and the thesis follows easily.
\end{proof}
\end{lem}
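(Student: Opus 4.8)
The plan is to reduce to the case of finitely many indeterminates and then to feed the situation into the known characterization of reduced rings via units of polynomial extensions. First I would use that whether a polynomial $f\in R[X]$ is a unit depends only on the finitely many indeterminates actually occurring in $f$ and in a putative inverse: if $fg=1$, then $f$ and $g$ together involve only some finite subset $Y\subseteq X$, so $f\in U(R[Y])$; consequently $U(R[X])=\bigcup U(R[Y])$ over finite subsets $Y\subseteq X$, and it suffices to prove the statement when $X=\{x_1,\dots,x_n\}$ is finite.

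The second ingredient is the elementary observation that the hypothesis $U(R)=\{1\}$ forces $R$ to be reduced: if $a^k=0$, then $1-a$ is invertible with inverse $1+a+\cdots+a^{k-1}$, hence $1-a=1$ and $a=0$. Since a polynomial ring over a reduced ring is again reduced, the whole chain $R,\ R[x_1],\ R[x_1,x_2],\ \dots$ consists of reduced rings. Now I would invoke \cite[Corollary 1.7]{KLM}, which says precisely that a ring $S$ is reduced if and only if $U(S[x])=U(S)$, and apply it step by step along that chain to obtain
\[
U(R[x_1,\dots,x_n])=U(R[x_1,\dots,x_{n-1}])=\cdots=U(R)=\{1\},
\]
which is the assertion.

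I do not expect a genuine obstacle here: the substance is concentrated in the cited characterization of reduced rings, while the remaining work — the reduction to finitely many variables and the induction on their number — is purely formal. The only point requiring a little attention is to verify that reducedness is inherited at each stage of the induction, so that \cite[Corollary 1.7]{KLM} is legitimately applicable throughout; this is immediate from the fact that polynomial extensions preserve reducedness.
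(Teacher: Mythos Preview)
Your proposal is correct and follows essentially the same route as the paper: reduce to finitely many indeterminates by locality, observe that $U(R)=\{1\}$ forces $R$ to be reduced, and then apply the characterization from \cite[Corollary 1.7]{KLM}. The only difference is that you spell out the induction on the number of variables (and the preservation of reducedness under polynomial extension) that the paper compresses into ``the thesis follows easily.''
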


Let us recall that a ring $R$ is 2-primal if its prime radical $B(R)$ coincides with the set of all its nilpotent elements.

\begin{prop} \label{dort}  Let $R$ be  a 2-primal $UU$-ring. Then, for any set $X$ of commuting indeterminates, the polynomial ring  $R[X]$ is a  $UJ$-ring.
\end{prop}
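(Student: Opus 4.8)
The plan is to reduce everything modulo the prime radical $B(R)$ and then invoke Lemma~\ref{trivial units}. Since $R$ is 2-primal we have $N(R)=B(R)$, so $B(R)$ is a nil two-sided ideal and $R/B(R)$ is reduced. Because $R$ is a $UU$-ring, $U(R)=1+N(R)=1+B(R)$; as $B(R)$ is nil, units lift along the canonical surjection $R\to R/B(R)$, and every unit $1+b$ with $b\in B(R)$ clearly maps to $1$. Hence $R/B(R)$ is a ring with trivial units, and Lemma~\ref{trivial units} (applied to $R/B(R)$) gives $U\big((R/B(R))[X]\big)=\{1\}$ for an arbitrary set $X$ of commuting indeterminates. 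Under the identification $(R/B(R))[X]\cong R[X]/B(R)[X]$, this says $R[X]/B(R)[X]$ has trivial units, hence is a $UJ$-ring.

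The key remaining point is that $B(R)[X]\subseteq J(R[X])$. I would get this from the standard fact that $B(R)[X]$ is exactly the prime radical of $R[X]$: if $Q$ is a prime ideal of $R[X]$ and $aRb\subseteq Q\cap R$ with $a,b\in R$, then $a\,R[X]\,b=(aRb)[X]\subseteq Q$, so $a\in Q$ or $b\in Q$, i.e. $Q\cap R$ is a prime ideal of $R$; thus $B(R)\subseteq Q\cap R$ and therefore $B(R)[X]\subseteq Q$. Conversely $P[X]$ is prime in $R[X]$ for every prime $P$ of $R$, so $B(R[X])=B(R)[X]$. Since the prime radical is always contained in the Jacobson radical, $B(R)[X]\subseteq J(R[X])$.

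Combining the two steps finishes the proof. Applying Proposition~\ref{basic}(5) with the ideal $I=B(R)[X]\subseteq J(R[X])$, and knowing that $R[X]/I\cong(R/B(R))[X]$ is a $UJ$-ring, we conclude that $R[X]$ is a $UJ$-ring. (Equivalently and more directly: any $f\in U(R[X])$ maps to a unit of $(R/B(R))[X]$, hence to $1$, so $f\in 1+B(R)[X]\subseteq 1+J(R[X])$, and the reverse inclusion $1+J(R[X])\subseteq U(R[X])$ is automatic.)

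The essential — and only — use of 2-primality is the passage through $B(R)$. For an arbitrary $UJ$-ring with nil Jacobson radical one only knows $J(R)=N(R)$ is a nil ideal, and deciding whether $N(R)[X]\subseteq J(R[X])$ is precisely Köthe's problem; replacing $N(R)$ by $B(R)$ sidesteps this because $B(R)[X]=B(R[X])\subseteq J(R[X])$ holds unconditionally, and here $B(R)=N(R)$ by 2-primality. Accordingly, the only point I expect to need a little care is this prime-radical identity for $R[X]$ (well known), together with, when $X$ is infinite, the routine reduction to finitely many indeterminates, which is already incorporated into Lemma~\ref{trivial units}.
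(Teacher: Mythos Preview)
Your argument is correct and follows essentially the same route as the paper's proof: both use $B(R[X])=B(R)[X]$ to identify $R[X]/B(R)[X]\cong (R/B(R))[X]$, invoke Lemma~\ref{trivial units} to see this quotient has trivial units, and then apply Proposition~\ref{basic}(5). You simply spell out two points the paper leaves implicit---why $R/B(R)$ has trivial units (via the $UU$ hypothesis and lifting units modulo the nil ideal $B(R)$), and a direct proof of $B(R[X])=B(R)[X]$ in place of the citation to \cite[Theorem 10.19]{L}.
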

\begin{proof}  It is known that $B(R[X])=B(R)[X]$ (cf.\cite[Theorem 10.19]{L}).  Thus the assumptions imposed on $R$ and Lemma \ref{trivial units} imply that the ring  $R[X]/B(R[X])\simeq (R/B(R))[X]$ has trivial units. Now, by Proposition \ref{basic}(5), $R[X]$ is a $UJ$-ring.
\end{proof}

\begin{prop}
\label{necessary condition for R[x] to be UJ}
If the polynomial ring $R[x]$ is $UJ$, then $R$ is a
$UJ$-ring and $J(R)$ is a nil ideal of $R$.
\end{prop}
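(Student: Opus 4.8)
The plan is to reduce both conclusions to Amitsur's theorem on the Jacobson radical of a polynomial ring, using the elementary fact that $R$ is a retract of $R[x]$.

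First I would observe that $U(R)=U(R[x])\cap R$. This is immediate from the ring retraction $\varepsilon\colon R[x]\to R$, $x\mapsto 0$: if $u\in R$ is a unit of $R[x]$ with $uf=fu=1$ for some $f\in R[x]$, then applying $\varepsilon$ gives $u\varepsilon(f)=\varepsilon(f)u=1$, so $u\in U(R)$; the reverse inclusion is clear. Applying the previous Proposition (the one on a subring $Z$ with $U(Z)=U(R)\cap Z$) to the subring $R\subseteq R[x]$ then already gives that $R$ is a $UJ$-ring, which is the first assertion.

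For the second assertion I would invoke Amitsur's theorem (see \cite{L}): there is a nil ideal $N$ of $R$ with $J(R[x])=N[x]$, and necessarily $N=J(R[x])\cap R$. Since $R[x]$ is a $UJ$-ring, $U(R[x])=1+J(R[x])=1+N[x]$; intersecting with $R$ and using the first step yields $U(R)=(1+N[x])\cap R=1+N$. On the other hand, a nil ideal is contained in the Jacobson radical, so $N\subseteq J(R)$ and hence $1+J(R)\subseteq U(R)=1+N\subseteq 1+J(R)$. All these sets therefore coincide, which forces $J(R)=N$; in particular $J(R)$ is nil.

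I do not expect a genuine obstacle. The one point that needs care is that one must use Amitsur's theorem in the precise form ``$J(R[x])=N[x]$ for a nil ideal $N$ of $R$'', not merely the weaker statement that $J(R[x])\cap R$ is a quasi-regular (or nil) ideal of $R$; with that in hand the rest is bookkeeping with the retraction $\varepsilon$. Note also that the computation $U(R)=1+N$ with $N$ nil recovers \emph{both} conclusions simultaneously (since then $1+J(R)\subseteq U(R)=1+N\subseteq 1+J(R)$), so the appeal to the earlier Proposition in the first step is a convenience rather than a necessity.
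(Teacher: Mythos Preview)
Your proof is correct and follows essentially the same route as the paper: both arguments rest on Amitsur's theorem $J(R[x])=N[x]$ with $N$ nil, and both deduce $J(R)=N$ from $1+J(R)\subseteq U(R[x])=1+N[x]$ together with $N\subseteq J(R)$. The only cosmetic difference is in the order and in how ``$R$ is $UJ$'' is extracted: the paper first establishes $J(R)=N$ and then reads off $U(R/J(R))=\{1\}$ from $R[x]/J(R[x])\simeq (R/J(R))[x]$, whereas you first invoke the subring proposition via the retraction $\varepsilon$---but as you yourself note, your second step already yields both conclusions, which is exactly the paper's logic.
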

\begin{proof}
 It is  known that $J(R[x])=I[x]$ for some nil ideal $I$ of $R$.  Thus, as $R[x]$ is $UJ$,  we have $1+J(R)\subseteq U(R[x])=1 +J(R[x])=1+I[x]$.  This implies that $J(R)=I$ is nil.  Then, as $R[x]$ is a $UJ$-ring,
  $\{1\}=U(R[x]/J(R[x]))=U((R/J(R)[X])$.  Hence also   $U(R/J(R))=\{1\}$, i.e. $R$ is a $UJ$-ring.
\end{proof}

The above proposition shows that if the $UJ$ property lifts from a ring $R$ to the polynomial ring $R[x]$, then $J(R)$ has to be a nil ideal.
The next theorem says that the problem of lifting the $UJ$ property is equivalent to K\"{o}the's problem for algebras over the field $\F_2$.
Recall that K\"{o}ethe's problem  (formulated in 1930)  asks whether a ring $R$    has no nonzero nil one-sided ideals provided $R$ has no nonzero nil ideals.
It is   known (see Theorem 6, \cite{K}) that the
problem has a positive solution if and only if it has positive solution for algebras
over fields.      There are many other
problems in ring theory which are equivalent or related to  it (see \cite{P}), one more is indicated below.
\begin{thm}\label{koethe}
The following conditions are equivalent:
\begin{enumerate}
\item  For any $UJ$-ring $R$ with nil Jacobson radical, the polynomial ring $R[x]$ is also  $UJ$;
\item  For any nil $\mathbb{F}_2$-algebra $A$,  $J(A[x])=A[x]$;
\item For any nil $\mathbb{F}_2$-algebra $A$ and $n\geq 1$ the matrix algebra $M_n(A)$ is nil;
\item  K\"{o}the's problem has a positive solution in the class of $\mathbb{F}_2$-algebras.
\end{enumerate}
\end{thm}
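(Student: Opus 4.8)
The plan is to take the equivalences $(2)\Leftrightarrow(3)\Leftrightarrow(4)$ as known: these are the classical reformulations of K\"othe's conjecture restricted to $\mathbb{F}_2$-algebras, due essentially to Krempa (see \cite{K}; cf.\ also \cite{P}), who proved both that K\"othe's problem may be tested on algebras over any fixed prime field, and that for a nil ring $N$ the assertions ``$N[x]$ is a radical ring (i.e.\ $J(N[x])=N[x]$)'' and ``$M_n(N)$ is nil for all $n$'' are each equivalent to it. Granting this, it suffices to close the cycle by proving $(1)\Rightarrow(2)$ and $(4)\Rightarrow(1)$; in effect this identifies condition $(1)$ with condition $(2)$.

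For $(1)\Rightarrow(2)$, let $A$ be a nil $\mathbb{F}_2$-algebra and form the Dorroh extension $R=\mathbb{F}_2\oplus A$ \emph{over $\mathbb{F}_2$}; it is precisely here that the field $\mathbb{F}_2$ is forced, since $R/A\cong\mathbb{F}_2$ has trivial units whereas the Dorroh extension over $\mathbb{Z}$ would not. As $A$ is a nil ideal it lies in $J(R)$, and since $J(R)/A\subseteq J(R/A)=J(\mathbb{F}_2)=0$ we get $J(R)=A$, which is nil; moreover $R/J(R)\cong\mathbb{F}_2$ has trivial units, so $R$ is a $UJ$-ring by Lemma \ref{lem char UJ}(2). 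Hence $(1)$ applies and $R[x]$ is $UJ$, i.e.\ $U(R[x])=1+J(R[x])$. I would then show $J(R[x])=A[x]$: the inclusion $J(R[x])\subseteq A[x]$ comes from $R[x]/A[x]\cong(R/A)[x]\cong\mathbb{F}_2[x]$ having zero Jacobson radical; conversely, for each $a\in A$ the element $1+a$ is a unit of $R$ (as $a$ is nilpotent), hence a unit of $R[x]$, and since $R[x]$ is $UJ$ this forces $a\in J(R[x])$, so $A\subseteq J(R[x])$ and then $A[x]=\sum_k Ax^k\subseteq J(R[x])$ because $J(R[x])$ is an ideal. Thus $A[x]=J(R[x])$ is an ideal all of whose elements are quasi-regular, which says exactly $J(A[x])=A[x]$, i.e.\ $(2)$.

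For $(4)\Rightarrow(1)$, assume K\"othe's problem has a positive solution for $\mathbb{F}_2$-algebras; by the Krempa reductions quoted above this means that for \emph{every} nil ring $N$ the polynomial ring $N[x]$ is a radical ring. Let $R$ be any $UJ$-ring with $J(R)$ nil. Applying this to $N=J(R)$, the ideal $J(R)[x]$ of $R[x]$ consists of quasi-regular elements, hence $J(R)[x]\subseteq J(R[x])$ (the Jacobson radical being the largest ideal consisting of quasi-regular elements, as recalled in Section 1). On the other hand $R/J(R)$ has trivial units (Lemma \ref{lem char UJ}(2)), so by Lemma \ref{trivial units} the ring $(R/J(R))[x]\cong R[x]/J(R)[x]$ has trivial units as well; consequently every $f\in U(R[x])$ reduces to $1$ modulo $J(R)[x]$, i.e.\ $f\in 1+J(R)[x]\subseteq 1+J(R[x])$. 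Together with the always-valid inclusion $1+J(R[x])\subseteq U(R[x])$ this yields $U(R[x])=1+J(R[x])$, so $R[x]$ is a $UJ$-ring.

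As for where the work really lies: the steps above are short, and the genuine content of the statement is that the only obstruction to the $UJ$ property (with nil radical) propagating to $R[x]$ is the K\"othe phenomenon. Thus the weight is carried by the classical equivalences $(2)\Leftrightarrow(3)\Leftrightarrow(4)$ and, inside $(4)\Rightarrow(1)$, by the passage from $\mathbb{F}_2$-algebras to an arbitrary nil Jacobson radical: a nil radical $J(R)$ of a $UJ$-ring need not itself be an $\mathbb{F}_2$-algebra (only $2\in J(R)$ is guaranteed, by Proposition \ref{basic}(1)), so one really does need Krempa's field-reduction in order to bring the $\mathbb{F}_2$-hypothesis to bear. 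The remaining point requiring a little care is the Dorroh-extension construction in $(1)\Rightarrow(2)$ and the verification that it is a $UJ$-ring with $J(R)=A$ nil.
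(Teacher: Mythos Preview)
Your $(1)\Rightarrow(2)$ is correct and matches the paper's argument (Dorroh extension over $\F_2$); your direct verification that $A[x]\subseteq J(R[x])$ is in fact slightly cleaner than the paper's, which quotes the Amitsur-type fact $J(R[x])=N[x]$ for a nil ideal $N$ and then identifies $N=A$.

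The gap is in $(4)\Rightarrow(1)$. You assert that Krempa's reduction lets one test K\"othe's problem on algebras over \emph{any fixed} prime field, and you use this to pass from ``K\"othe holds for $\F_2$-algebras'' to ``for every nil ring $N$, $N[x]$ is radical''. That step is not what Krempa proved: the classical reduction (Theorem~6 of \cite{K}, as the paper also quotes) says K\"othe for all rings is equivalent to K\"othe for algebras over fields, i.e.\ over \emph{all} fields; it does not allow one to fix the prime $2$ in advance. So from $(4)$ alone you cannot conclude that $J(R)[x]$ is radical when $J(R)$ is merely a nil ring, and your argument does not close. You yourself flag the issue in the last paragraph, but the ``field-reduction'' you invoke to resolve it is not available.

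The paper closes the cycle via $(2)\Rightarrow(1)$ instead, and the missing idea is exactly the reduction modulo $2$ that the $UJ$ hypothesis hands you for free. Since $R$ is $UJ$, $2\in J(R)$ (Proposition~\ref{basic}(1)); since $J(R)$ is nil, $2$ is nilpotent, so $2R$ is a nilpotent ideal of $R$ and $2R[x]$ is a nilpotent ideal of $R[x]$. By Proposition~\ref{basic}(5) it therefore suffices to show $(R/2R)[x]$ is $UJ$, and $R/2R$ is now an honest $\F_2$-algebra with nil Jacobson radical. At this point $(2)$ applies directly to the nil $\F_2$-algebra $J(R/2R)$, giving $J(R/2R)[x]\subseteq J\bigl((R/2R)[x]\bigr)$; combined with Lemma~\ref{trivial units} for $(R/2R)/J(R/2R)$, this yields that $(R/2R)[x]$ is $UJ$ exactly as in your final paragraph. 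In short: do not try to import K\"othe for arbitrary nil rings from $(4)$; instead kill $2$ first and stay inside $\F_2$-algebras, where $(2)$ is immediately usable.
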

\begin{proof} The equivalence of statements (2)-(4) is a well known result of Krempa (\cite{K}).

(1) $\Rightarrow$ (2) Assume that (1) holds and    $A$ is a nil $\F_2$-algebra. Let $A^*$  be  the $\mathbb{F}_2$-algebra  obtained from $A$ by adjoining unity with the help of $\mathbb{F}_2$.  Note that  $A^* = A \cup (1 + A)$,
$J(A^*)= A$ and $A^*/J(A^*)=\mathbb{F}_2$.  In particular,   $A^*$ is a $UJ$-ring and, by (1),  $A^*[x]$ also has the $UJ$ property. Consequently, $U(A^*[x]/J(A^*[x]) )=\{1\}$ follows.  Let $N$ be the ideal of $A^*$ such that  $J(A^*[x])=N[x]$. As $U((A^*/N)[x])=\{1\}$,  we get $A^*/N$ is reduced.  This yields $A=N$, i.e $J(A[x])=A[x]$.

(2)  $\Rightarrow$ (1) Let $R$ be a ring as in (1). By Proposition \ref{basic}(1),   $2\in J(R)$.   Thus, as $J(R)$ is nil,  $(2R)[x]$ is a nilpotent ideal of $R[x]$. Therefore, in virtue of  Proposition \ref{basic}(5), to show  that $R[x]$ has the $UJ$ property, it is enough to prove that  $R[x]/(2R[x])\simeq (R/2R)[x]$ is a  $UJ$-ring.  Thus,  eventually replacing $R$ by $R/2R$,  we may assume, that $2=0$ in $R$, i.e. $R$ is an algebra over  the field $\mathbb{F}_2$. Then,  the property (2) gives $J(R[x])=J(R)[x]$ (because  $J(R)$ is nil), and  so $R[x]/J(R[x])\simeq (R/J(R))[x]$. Since $R$ is $UJ$, we get  $U(R/J(R))=\{1\}$ and Lemma \ref{trivial units} implies that $U((R/J(R))[x])  =\{1\}$. This proves that $R[x]$ is $UJ$, as desired.
\end{proof}

Let us observe that whenever  $n>1$,  the matrix ring $M_n(R)$ does not have the  $UJ$ property.   Indeed, the ring $M_n(R)/J(M_n(R))\simeq M_n(R/J(R))$ is not reduced when $n>1$, so   $M_n(R)$ can not be  $UJ$, as observed in  Proposition \ref{basic}(3).
\begin{prop} \label{thm corners}  Let $R$ be a ring   with an  idempotent $e\in R$. The following conditions are equivalent:
\begin{enumerate}
  \item $R$ is a $UJ$-ring;
  \item $eRe$ and $(1-e)R(1-e)$  are $UJ$-rings, and   $eR(1-e),(1-e)Re\subseteq J(R)$.
\end{enumerate}
\end{prop}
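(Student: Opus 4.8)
The plan is to analyze both implications through the Peirce decomposition $R = eRe \oplus eRf \oplus fRe \oplus fRf$, where $f = 1-e$, together with the standard identity $J(eRe) = eJ(R)e = eRe \cap J(R)$ and the characterization of $UJ$-rings via trivial units modulo the radical (Lemma \ref{lem char UJ}).

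For $(1) \Rightarrow (2)$ I would first record that any corner of a $UJ$-ring is again $UJ$: if $u \in U(eRe)$ has inverse $v \in eRe$, then $u + f$ is a unit of $R$ with inverse $v + f$, so $u + f \in 1 + J(R)$ forces $u - e \in eRe \cap J(R) = J(eRe)$; since $e + J(eRe) \subseteq U(eRe)$ holds for any ring, this yields $U(eRe) = e + J(eRe)$, and symmetrically for $fRf$. For the off-diagonal terms, note that every $x \in eRf$ satisfies $x^2 = 0$ because $fe = 0$; hence $x$ is nilpotent and lies in $J(R)$ since $R/J(R)$ is reduced by Proposition \ref{basic}(3). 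The same argument gives $fRe \subseteq J(R)$.

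For $(2) \Rightarrow (1)$ I would pass to $\bar R := R/J(R)$ and write $\bar x$ for the image of $x$. The hypothesis $eRf, fRe \subseteq J(R)$ makes the two off-diagonal Peirce components of $\bar R$ vanish, so $\bar e$ is a central idempotent and $\bar R \cong \bar e\bar R\bar e \times \bar f \bar R \bar f$ as rings. Moreover $\bar e \bar R \bar e$ is the image of $eRe$ under the quotient map $R \to \bar R$, whose kernel restricted to $eRe$ is $eRe \cap J(R) = J(eRe)$, so $\bar e \bar R \bar e \cong eRe/J(eRe)$; since $eRe$ is $UJ$, this ring has trivial unit group, and likewise for $\bar f \bar R \bar f$. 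Therefore $U(\bar R) = U(\bar e \bar R \bar e) \times U(\bar f \bar R \bar f)$ is trivial, and $R$ is a $UJ$-ring by Lemma \ref{lem char UJ}.

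The computations with idempotents are all routine; the one point deserving care is the structural claim in the second implication — that killing the off-diagonal Peirce pieces modulo $J(R)$ really yields a ring direct product, and that $J(eRe)$ is recovered as $eRe \cap J(R)$ — but both are standard facts about Peirce decompositions and corner rings, so I expect no genuine obstacle beyond spelling them out cleanly.
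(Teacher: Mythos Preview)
Your argument is correct and follows essentially the same route as the paper's: both directions go through the Peirce decomposition, the identity $J(eRe)=eRe\cap J(R)$, and the characterization of $UJ$-rings via $U(R/J(R))=\{1\}$. The only cosmetic difference is in $(1)\Rightarrow(2)$: for the corners you lift a unit $u\in U(eRe)$ to $u+f\in U(R)$ and read off $u-e\in J(eRe)$, whereas the paper embeds $eRe/J(eRe)$ into $R/J(R)$ and uses that $\bar e$ is central there; for the off-diagonal pieces you argue directly that elements of $eRf$ square to zero and invoke reducedness of $R/J(R)$, while the paper packages the same observation through Proposition~\ref{basic}(4).
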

 \begin{proof}
 Suppose $R$ is a $UJ$-ring. Then, taking $x=e$ and $y=1-e$ in Proposition \ref{basic}(4),  we obtain $eR(1-e),(1-e)Re\subseteq J(R)$. Recall that $J(eRe)=J(R)\cap eRe$, thus the natural homomorphism form $eRe$ into $R/J(R)$ induces an embedding of $eRe/J(eRe)$ into $R/J(R)$.  Moreover, by Proposition \ref{basic}(3), $\bar e=e+J(R)$ is a central idempotent of $\bar R=R/J(R)$. Thus  $\{\bar 1\}=U(\bar R)=U(\bar e \bar R)\times
 U((\bar 1-\bar e)\bar R)$, so the ring $eRe/J(eRe)\simeq \bar eR$ has trivial units, i.e. $eRe$ is a $UJ$-ring. Similarly, $(1-e)R(1-e)$ is a $UJ$-ring.

 Suppose (2) holds. Making use of Pierce decomposition of $R$ with respect to $e$ and the assumption that $eR(1-e),(1-e)Re\subseteq J(R)$, it is clear that $R/J(R)\simeq eRe/J(eRe)\times (1-e)R(1-e)/J((1-e)R(1-e))$ and $U(R/J(R))=\{\bar 1\}$  follows as both $eRe$ and $(1-e)R(1-e)$ are $UJ$-rings.
  \end{proof}

  The above proposition can be extended to Morita context but instead of   using Proposition \ref{basic} we will use  the description of $N$-radicals (Jacobson radical is such) of Morita contexts given in  \cite[Theorem \ref{thm corners}]{GW}.

  Let us recall that a quadruple  $(R, V, W, S)$ is a Morita context
where $R,S$ are rings, $V$, $W$ are  $(R-S)$ and  $(S-R)$ bimodules, respectively and the   products $\phi : V\otimes _SW\rightarrow  R$ and $\psi :W\otimes _RV\rightarrow S$ are given such that matrices $\left(
            \begin{array}{cc}
              R & V \\
              W & S \\
            \end{array}
          \right)
$  form an associative ring with natural matrix operations defined with the help of $\phi$ and $\psi$.

\begin{thm}\label{morita}
 Let $(R, V, W, S)$ be  a Morita context and $T:=\left(
            \begin{array}{cc}
              R & V \\
              W & S \\
            \end{array}
          \right)
$. The following conditions are equivalent:
 \begin{enumerate}
   \item  $T$ is a $UJ$-ring;
   \item  $R$, $S$ are $UJ$-rings and $VW\subseteq J(R)$, $WV\subseteq J(S)$;
   \item $R$, $S$ are $UJ$-rings and $T/J(T)\simeq R/J(R)\oplus S/J(S)$.
 \end{enumerate}

\end{thm}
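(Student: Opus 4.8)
The plan is to apply Proposition~\ref{thm corners} to the idempotent $e=\begin{pmatrix}1&0\\0&0\end{pmatrix}\in T$. For this $e$ one has $eTe\cong R$, $(1-e)T(1-e)\cong S$, while the corners $eT(1-e)$, $(1-e)Te$ are the subgroups $\begin{pmatrix}0&V\\0&0\end{pmatrix}$, $\begin{pmatrix}0&0\\W&0\end{pmatrix}$ of $T$. By Proposition~\ref{thm corners}, $T$ is a $UJ$-ring if and only if $R$ and $S$ are $UJ$-rings and $\begin{pmatrix}0&V\\0&0\end{pmatrix},\begin{pmatrix}0&0\\W&0\end{pmatrix}\subseteq J(T)$. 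So the whole theorem reduces to (a) translating this corner condition into the trace conditions $VW\subseteq J(R)$, $WV\subseteq J(S)$, and (b) computing $J(T)$ under those conditions.

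The computational core is the claim that \emph{if $VW\subseteq J(R)$ and $WV\subseteq J(S)$, then}
\[ J(T)=\begin{pmatrix}J(R)&V\\ W&J(S)\end{pmatrix}=:I. \]
I would prove this in three steps. First, $I$ is an ideal of $T$: the off-diagonal entries are absorbed by the Morita axioms $RV\subseteq V$, $VS\subseteq V$, $SW\subseteq W$, $WR\subseteq W$, while the diagonal entries remain inside $J(R)$, $J(S)$ \emph{precisely because} $VW\subseteq J(R)$ and $WV\subseteq J(S)$. Second, $I\subseteq J(T)$: for $k=\begin{pmatrix}a&v\\ w&b\end{pmatrix}\in I$ one has $1-a\in U(R)$, and the formal Schur-complement factorization of $\begin{pmatrix}1-a&-v\\ -w&1-b\end{pmatrix}$ — legitimate in the Morita ring $T$, each unitriangular factor having its off-diagonal entry in $V$ or $W$ — reduces invertibility of $1-k$ in $T$ to invertibility in $S$ of $(1-b)-w(1-a)^{-1}v=1-\bigl(b+w(1-a)^{-1}v\bigr)$; since $w(1-a)^{-1}v\in WRV\subseteq WV\subseteq J(S)$, this lies in $1+J(S)\subseteq U(S)$. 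Thus $1+I\subseteq U(T)$ and, $I$ being an ideal, $I\subseteq J(T)$. Third, $J(T)\subseteq I$: modulo $I$ the structure maps $\phi$ and $\psi$ vanish — again because $VW\subseteq J(R)$, $WV\subseteq J(S)$ — so $T/I\cong R/J(R)\times S/J(S)$, a ring with zero Jacobson radical, giving $J(T)\subseteq I$. (This formula for $J(T)$ may alternatively be read off the description of $N$-radicals of Morita contexts in~\cite{GW}.)

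With the claim in hand I would run the cycle $(1)\Rightarrow(2)\Rightarrow(3)\Rightarrow(1)$. For $(1)\Rightarrow(2)$: Proposition~\ref{thm corners} gives that $R$ and $S$ are $UJ$ and that the two corners lie in the ideal $J(T)$; hence $\begin{pmatrix}0&V\\0&0\end{pmatrix}\begin{pmatrix}0&0\\W&0\end{pmatrix}=\begin{pmatrix}VW&0\\0&0\end{pmatrix}\subseteq J(T)\cap eTe=J(eTe)\cong J(R)$, so $VW\subseteq J(R)$, and symmetrically $WV\subseteq J(S)$. For $(2)\Rightarrow(3)$: the claim gives $J(T)=I$, whence $T/J(T)\cong R/J(R)\times S/J(S)=R/J(R)\oplus S/J(S)$. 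For $(3)\Rightarrow(1)$: since $R$ and $S$ are $UJ$ we have $U(R/J(R))=U(S/J(S))=\{1\}$, so $U\bigl(R/J(R)\oplus S/J(S)\bigr)=\{1\}$; transporting along the isomorphism in $(3)$ yields $U(T/J(T))=\{1\}$, i.e.\ $T$ is a $UJ$-ring.

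The step needing real care — the ``hard part'' — is the computation of $J(T)$ in the claim, and within it the inclusion $J(T)\subseteq I$: the hypotheses $VW\subseteq J(R)$, $WV\subseteq J(S)$ are indispensable there, being exactly what makes $T/I$ the honest direct product $R/J(R)\times S/J(S)$; without them the trace ideals genuinely enter the radical, as the general formula of~\cite{GW} shows. The Schur-complement bookkeeping for $I\subseteq J(T)$ and the standard Peirce identity $J(eTe)=J(T)\cap eTe$ used in $(1)\Rightarrow(2)$ are routine.
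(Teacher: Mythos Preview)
Your proof is correct and complete, but it takes a genuinely different route from the paper's.

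The paper does \emph{not} apply Proposition~\ref{thm corners}; in fact the text just before the theorem explicitly says it will bypass that proposition and instead invoke the general formula for the Jacobson radical of a Morita context from \cite[Theorem~3.18.14]{GW}, namely $J(T)=\begin{pmatrix}J(R)&B\\ C&J(S)\end{pmatrix}$ with $B=\{v\in V\mid vW\subseteq J(R)\}$ and $C=\{w\in W\mid Vw\subseteq J(R)\}$. From this, $(1)\Rightarrow(2)$ is obtained by observing that $T/J(T)$ is reduced (Proposition~\ref{basic}(3)), which forces the nilpotent off-diagonal blocks into $J(T)$, hence $B=V$, $C=W$, and the trace conditions fall out of the description of $B$ and $C$; the remaining implications are then immediate from the same \cite{GW} formula and Lemma~\ref{lem char UJ}.

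Your approach instead (i) uses Proposition~\ref{thm corners} for $(1)\Rightarrow(2)$, and (ii) replaces the citation of \cite{GW} by a direct, self-contained computation of $J(T)$ in the special case $VW\subseteq J(R)$, $WV\subseteq J(S)$ via a Schur-complement argument. What your approach buys is independence from the external reference \cite{GW} and a concrete reason why the trace hypotheses are exactly what is needed; what the paper's approach buys is brevity, since the general \cite{GW} formula handles all three implications at once without any quasi-inverse bookkeeping.
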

\begin{proof}
 By  \cite[Theorem 3.18.14]{GW}, we have $J(T)=\left(
            \begin{array}{cc}
              J(R) & B \\
              C & J(S) \\
            \end{array}
          \right)$, where $B=\{v\in V\mid Wv\subseteq J(S)\}=\{v\in V\mid vW\subseteq J(R)\}$ and $C=\{w\in W\mid wV\subseteq J(S)\}=\{w\in W\mid Vw\subseteq J(R)\}$.

    $(1)\Rightarrow (2)$      Suppose that $T$ is a $UJ$-ring. Then  $T/J(T)$ does not possess   nonzero nilpotent elements.  This forces $ \left(\begin{array}{cc}
                                                                                                                          0 & V \\
                                                                                                                         0 & 0
                                                                                                                        \end{array}\right), \left(\begin{array}{cc}
                                                                                                                          0 & 0 \\
                                                                                                                         W & 0
                                                                                                                        \end{array}\right)
           \subseteq J(T)$, i.e. $B=V$,  $C=W$, $VW\subseteq J(R)$, $WV\subseteq J(S)$ and $T/J(T)\simeq  R/J(R)\oplus S/J(R)$. Thus, by (3) and (7) of Proposition \ref{basic}, $R,S$ are $UJ$-rings, i.e. (2) holds.

           Implications $(2)\Rightarrow (3)$  and $(3)\Rightarrow (1)$ are    consequences of  \cite[Theorem 3.18.14]{GW} and Lemma \ref{lem char UJ}, respectively.
\end{proof}

\section{Clean rings and $UJ$ property}

Recall that an element $r\in  R$ is clean ($J$-clean)
provided   there exist an idempotent $e\in R$ and an element $t\in U(R)$ ($t\in J(R)$) such that
$r = e + t $.  A ring $R$ is clean ($J$-clean) if every element of $R$ has such clean ($J$-clean) decomposition. It is known that every $J$-clean ring is clean (in fact if $-r=e+j$ is a $J$-clean decomposition of $-r\in R$, then $r=(1-e)+ (-1-j)$ is a clean decomposition of $r$).

\begin{prop}\label{thm:jc} For a ring $R$, the following conditions are equivalent:
\begin{enumerate}
\item $R$ is a $UJ$-ring.
\item All  clean elements of $R$ are J-clean.
\end{enumerate}
\end{prop}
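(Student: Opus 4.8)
The plan is to prove the two implications separately, using the characterization of $UJ$-rings from Lemma \ref{lem char UJ} and the basic structural facts from Proposition \ref{basic}.

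First I would show $(1)\Rightarrow(2)$. Assume $R$ is $UJ$ and let $r=e+u$ be a clean decomposition of an arbitrary element $r\in R$, with $e$ idempotent and $u\in U(R)$. By hypothesis $u\in 1+J(R)$, so write $u=1+j$ with $j\in J(R)$. Then $r=e+1+j=(1-(1-e))+1+j$. The element $1-e$ is an idempotent, and I want to exhibit $r$ as (idempotent) plus (element of $J(R)$). We have $r-(1-e)=1+e+j$; I would instead observe $2\in J(R)$ by Proposition \ref{basic}(1), hence $1+e = (1-e)+2e$ and $2e\in J(R)$, so $r = (1-e) + \bigl(2e+j\bigr)$ with $2e+j\in J(R)$. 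Thus $r$ is $J$-clean, and since $r$ was an arbitrary (clean) element this gives (2). The only subtlety here is making sure the idempotent used, $1-e$, is legitimate and that the remaining summand genuinely lies in $J(R)$, which both hold since $J(R)$ is an ideal containing $2$.

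Next, $(2)\Rightarrow(1)$. Here I would take an arbitrary unit $u\in U(R)$ and show $u\in 1+J(R)$, equivalently $u-1\in J(R)$. The element $u$ is trivially clean: $u=0+u$ with $0$ idempotent. By hypothesis $u$ is then $J$-clean, so $u=e+j$ for some idempotent $e$ and $j\in J(R)$; hence $e=u-j\in U(R)$ modulo $J(R)$, so $\bar e$ is a unit idempotent in $R/J(R)$, forcing $\bar e=\bar 1$, i.e. $e\in 1+J(R)$ — wait, more directly: an idempotent $e$ with $e=u-j$ where $u$ is a unit gives, passing to $R/J(R)$, that $\bar e=\bar u$ is a unit, and a unit idempotent equals $1$, so $e-1\in J(R)$. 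Therefore $u=e+j\in 1+J(R)$. The possible obstacle is the direction of the argument: one must be careful that the hypothesis (2) applies to $u$ itself (it does, via the trivial clean decomposition $u=0+u$), rather than needing it for $-u$ or a shifted element.

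Alternatively, and perhaps more cleanly for $(2)\Rightarrow(1)$, I would argue by contrapositive using Lemma \ref{lem char UJ}(2): if $R$ is not $UJ$, there is a unit $v\in U(R)$ with $\bar v\neq \bar 1$ in $R/J(R)$; then the clean element $v=0+v$ cannot be $J$-clean, since a $J$-clean decomposition $v=e+j$ would give $\bar v=\bar e$ an idempotent, contradicting that $\bar v$ is a nontrivial unit (a nontrivial idempotent is never a unit unless it is $1$). Either formulation works; I expect the main thing to watch is simply keeping straight which elements the clean/$J$-clean hypotheses are being applied to, since everything else reduces to the already-established fact that $J$-clean $\Rightarrow$ clean together with $2\in J(R)$.
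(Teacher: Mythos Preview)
Your proof is correct and follows essentially the same route as the paper. The direction $(1)\Rightarrow(2)$ is identical: write $u=1+j$, use $2\in J(R)$, and obtain the $J$-clean decomposition $r=(1-e)+(2e+j)$. For $(2)\Rightarrow(1)$ the paper argues in $R$ rather than in $R/J(R)$: from $u=e+j$ it multiplies by $u^{-1}$ to get $eu^{-1}=1-ju^{-1}\in U(R)$, hence $e\in U(R)$ and so $e=1$; your quotient argument reaches the same conclusion (indeed $e-1\in J(R)$ already forces $e\in U(R)$, hence $e=1$), so the two arguments are equivalent.
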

\begin{proof} $(1)\Rightarrow (2)$ Assume that  $r\in  R$ is a clean element and $r=e+u$ is its clean decomposition.  As $R$ is a $UJ$-ring, $2\in J(R)$ and $u=1+j$ for some $j\in J(R)$. Then $2e+j\in J(R)$ and  $r=e+1+j=(1-e) + (2e+j)$ is a $J$-clean decomposition of $r$, i.e. (2) holds.

 $(2) \Rightarrow (1)$ Let $u\in U(R)$. Then $u$ is a  clean element and, by the
 hypothesis,   $u$ is  $J$-clean. Let    $u=e+j$ be a $J$-clean decomposition of $u$.
 Since $1=eu^{-1}+ju^{-1}$, we obtain that   $eu^{-1}=1-ju^{-1}$ is a unit of $R$.
 Hence   $e=1$. This means  that   $u=1+j$ and $U(R)=1+J(R)$ follows.
\end{proof}

\begin{thm} \label{thm clean}
For a ring $R$, the following conditions are equivalent:
 \begin{enumerate}
   \item   $R$ is a clean   $UJ$-ring;
   \item  $R/J(R)$ is a Boolean ring and idempotents lift modulo $J(R)$;
   \item $R$ is a $J$-clean, $UJ$-ring;
   \item $R$ is a $J$-clean ring.
 \end{enumerate}
\end{thm}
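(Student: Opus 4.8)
The plan is to route everything through Proposition~\ref{thm:jc} and the remark preceding it that every $J$-clean ring is clean, treating $(1)\Leftrightarrow(3)\Leftrightarrow(4)$ together and then establishing $(2)\Leftrightarrow(4)$ by reduction modulo $J(R)$.

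For the three conditions that mention the $UJ$ property, I would argue as follows. The implication $(3)\Rightarrow(4)$ is trivial, and $(3)\Rightarrow(1)$ is immediate from ``$J$-clean $\Rightarrow$ clean''. For $(1)\Rightarrow(3)$: if $R$ is clean then every element of $R$ is clean, and since $R$ is $UJ$, Proposition~\ref{thm:jc} turns each clean element into a $J$-clean one; hence every element of $R$ is $J$-clean, i.e.\ $R$ is $J$-clean, and it is still $UJ$ by hypothesis. For $(4)\Rightarrow(3)$: a $J$-clean ring is clean, and since all of its elements, in particular all its clean elements, are $J$-clean, Proposition~\ref{thm:jc} gives that it is $UJ$; alternatively, and bypassing Proposition~\ref{thm:jc}, writing a unit as $u=e+j$ with $e^2=e$ and $j\in J(R)$ yields $eu^{-1}=1-ju^{-1}\in U(R)$, so $e$ is a unit, forcing $e=1$ and $u\in 1+J(R)$. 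This closes $(1)\Leftrightarrow(3)\Leftrightarrow(4)$.

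It remains to prove $(2)\Leftrightarrow(4)$, which I would do by passing to $\overline R:=R/J(R)$. If $R$ is $J$-clean, then a decomposition $r=e+j$ with $e^2=e$ and $j\in J(R)$ reduces to $\bar r=\bar e$, so every element of $\overline R$ is idempotent; thus $\overline R$ is Boolean (a ring in which every element is idempotent is commutative of characteristic $2$), and, by the same observation, the idempotent $e$ occurring in a $J$-clean decomposition of a preimage of a given idempotent of $\overline R$ is a lift of it, so idempotents lift modulo $J(R)$ and $(2)$ holds. Conversely, assume $(2)$. Given $r\in R$, its image $\bar r$ is an idempotent of the Boolean ring $\overline R$, hence lifts to an idempotent $e\in R$; then $r-e\in J(R)$ and $r=e+(r-e)$ is a $J$-clean decomposition, so $R$ is $J$-clean, i.e.\ $(4)$ holds.

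I do not anticipate a genuine obstacle: the whole argument is an assembly of Proposition~\ref{thm:jc}, the implication ``$J$-clean $\Rightarrow$ clean'', and the classical fact that a ring all of whose elements are idempotent is Boolean. The only point requiring a little care is that the use of Proposition~\ref{thm:jp} in $(1)\Rightarrow(3)$ is not circular, which it is not since that proposition was established independently; a reader preferring to avoid it altogether may substitute the direct computation $r=e+u=(1-e)+(2e+j)$ (valid because $2\in J(R)$ by Proposition~\ref{basic}(1)) in $(1)\Rightarrow(3)$ and the unit computation indicated above in $(4)\Rightarrow(3)$.
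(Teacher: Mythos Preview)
Your argument is correct and uses the same core ingredients as the paper's proof (Proposition~\ref{thm:jc}, the implication ``$J$-clean $\Rightarrow$ clean'', and the Boolean characterization of $R/J(R)$). The organization differs: the paper runs the cycle $(1)\Rightarrow(2)\Rightarrow(3)\Rightarrow(4)\Rightarrow(1)$, whereas you first close $(1)\Leftrightarrow(3)\Leftrightarrow(4)$ via Proposition~\ref{thm:jc} and then attach $(2)$ through $(2)\Leftrightarrow(4)$. The one substantive difference is how idempotent lifting modulo $J(R)$ is obtained: the paper, working from $(1)$, invokes the general fact from \cite{NZ} that idempotents lift modulo every ideal of a clean ring, while you, working from $(4)$, read off a lift directly from a $J$-clean decomposition of a preimage. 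Your route is therefore self-contained and avoids the external citation, at the cost of a slightly less linear layout. (Minor typo: near the end you refer to Proposition~\texttt{thm:jp}; this should be \texttt{thm:jc}.)
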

\begin{proof} $(1)\Rightarrow (2)$
 The imposed assumptions imply that $R/J(R)$ is a clean ring such that $U(R)=\{1\}$. In particular, $2=0$ in $(R/J(R))$ and every element $r\in R/J(R)$ is of the form $r=e_r+1$, for a suitable idempotent $e_r$. Hence $r^2=r$, i.e. $R/J(R)$ is Boolean. It is known (cf.\cite[Lemma 17]{NZ}), that  idempotents lift modulo every
ideal $I$ of a clean ring $R$, so (2) follows.

 $(2)\Rightarrow (3)$ Suppose (2) holds and let $a\in R$. Then $a+J(R)\in R/J(R)$ is an idempotent. Hence there exists an idempotent $e\in R$ such that $a-e\in J(R)$, i.e. $a$ is a $J$-clean element. This shows that $R$ is $J$-clean. If $u\in U(R)$, then $u+J(R)$ is a unit in a Boolean ring $  R/J(R)$. Thus $u-1\in J(R)$, so $R$ is a $UJ$-ring.

 $(3)\Rightarrow (4)$ This is a tautology.

 $(4)\Rightarrow (1)$ This implication is given by  Proposition \ref{thm:jc}.
\end{proof}

It is known (cf.  \cite[Theorem 5.9]{Di}) that a ring $R$ is uniquely nil clean if and only if  $R/J(R)$ is Boolean, $J(R)$ is nil and idempotents lift uniquely modulo $J(R)$.  In particular,  the class of  uniquely nil clean ring is contained in the class of $UJ$-rings. However, slightly bigger class of conjugate nil clean rings is not included in $UJ$-rings, as the ring $M_2(\mathbb{F}_2)$ is conjugate nil clean (see  \cite[Corollary 2.4]{M}) but it is not a $UJ$-ring.   Assuming additionally in Theorem \ref{thm clean} that $J(R) $ is a nil ideal, we get:

\begin{thm}  For a ring $R$, the following conditions are equivalent:
\begin{enumerate}\label{thm nil clean}
\item $R$ is a clean $UJ$-ring with nil Jacobson radical $J(R)$;
\item $R/J(R)$ is a Boolean ring and $J(R)$ is nil;
    \item  $R$ is a nil clean $UJ$-ring;
        \item $R$ is a conjugate nil clean $UJ$-ring;

  \item $R$ is a conjugate nil clean ring and $N(R)$ is an ideal of $R$;
   \item $R/J(R) $ is a Boolean ring and $R$ is a $UU$-ring.
\end{enumerate}
\end{thm}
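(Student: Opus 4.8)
The plan is to prove the six conditions equivalent by establishing a cycle of implications, leaning heavily on Theorem~\ref{thm clean} (which already handles the non-nil case) and Remark~\ref{$UJ$ nil elements} (which equates ``$UJ$ with nil $J(R)$'' with ``$UU$ and $N(R)$ an ideal''). First I would observe that adding the hypothesis ``$J(R)$ is nil'' to the four equivalent conditions of Theorem~\ref{thm clean} immediately gives the equivalence of (1), (2), and (3): indeed (1)$\Leftrightarrow$(2) is Theorem~\ref{thm clean}$(1)\Leftrightarrow(2)$ with the extra clause ``$J(R)$ nil'' carried along verbatim, and a nil clean ring is in particular clean, so (3)$\Rightarrow$(1); conversely a clean $UJ$-ring with $J(R)$ nil is $J$-clean by Theorem~\ref{thm clean}, and a $J$-clean ring with nil Jacobson radical is nil clean because each $J$-clean decomposition $r=e+j$ has $j\in J(R)\subseteq N(R)$. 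So the backbone $(1)\Leftrightarrow(2)\Leftrightarrow(3)$ is essentially free.

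Next I would bring in conditions (4) and (5). The implication $(3)\Rightarrow(4)$ is a tautology once one recalls that every nil clean ring is conjugate nil clean (the identity is a conjugation), so a nil clean $UJ$-ring is a conjugate nil clean $UJ$-ring. For $(4)\Rightarrow(5)$, a $UJ$-ring with nil $J(R)$ is a $UU$-ring with $N(R)$ an ideal by Remark~\ref{$UJ$ nil elements}; so starting from (4) we get that $R$ is conjugate nil clean and $N(R)$ is an ideal of $R$, which is exactly (5). The step I expect to be the main obstacle is $(5)\Rightarrow(6)$ (or equivalently closing the cycle back to one of the first three conditions): I must show that if $R$ is conjugate nil clean and $N(R)$ is an ideal, then $R/J(R)$ is Boolean and $R$ is a $UU$-ring. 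The idea is that when $N(R)$ is an ideal it contains every nilpotent, hence every conjugate $u^{-1}nu$ of a nilpotent $n$ lies in $N(R)$ already, so conjugate nil clean decompositions $r=e+u^{-1}nu$ become ordinary nil clean decompositions $r=e+n'$ with $n'\in N(R)$; thus $R$ is nil clean. A nil clean ring has $N(R)=J(R)$ (this is standard: in a nil clean ring $J(R)$ is nil and $R/J(R)$ is Boolean, so all nilpotents are swept into $J(R)$; here one can argue directly using that $N(R)$ is an ideal so $N(R)\subseteq J(R)$, while $J(R)\subseteq N(R)$ since $J(R)$ is always nil in a nil clean ring), hence $J(R)$ is nil and $R/J(R)$ is Boolean, and then $U(R)=1+N(R)$ follows because $R/J(R)$ Boolean has trivial units and $J(R)=N(R)$, giving the $UU$ property; that is (6).

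Finally I would close the loop with $(6)\Rightarrow(2)$: if $R/J(R)$ is Boolean and $R$ is a $UU$-ring, I must show $J(R)$ is nil. Since $R$ is $UU$, $U(R)=1+N(R)$, and since $R/J(R)$ is Boolean it has trivial units, hence $R$ is a $UJ$-ring with $1+J(R)=U(R)=1+N(R)$, forcing $J(R)\subseteq N(R)$; but $J(R)$ need not be nil just from this, so instead I would note that a $UU$-ring has nil Jacobson radical --- this is part of the Danchev--Lam theory of $UU$-rings, but to stay self-contained I would argue that $1+J(R)=1+N(R)$ together with $N(R)$ being closed under the relevant operations in a $UU$-ring gives $J(R)=N(R)$ as sets, and since $R/J(R)$ is Boolean (hence reduced) every nilpotent of $R$ lies in $J(R)$, so $N(R)\subseteq J(R)$; combined with $J(R)\subseteq N(R)$ we get $J(R)=N(R)$, and as $J(R)$ is a nil ideal in a ring whose factor by $J(R)$ is Boolean --- here I invoke that in a $UU$-ring $J(R)$ is nil, citing \cite{DL} --- condition (2) holds. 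With $(1)\Leftrightarrow(2)\Leftrightarrow(3)$, $(3)\Rightarrow(4)\Rightarrow(5)\Rightarrow(6)\Rightarrow(2)$ the cycle is complete and all six are equivalent.
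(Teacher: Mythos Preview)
Your backbone $(1)\Leftrightarrow(2)\Leftrightarrow(3)$ and the arguments for $(5)\Rightarrow(6)$ and $(6)\Rightarrow(2)$ are fine in spirit, but the cycle breaks at $(3)\Rightarrow(4)$ because you have misread the definition of \emph{conjugate nil clean}. In \cite{M} a ring is conjugate nil clean if it is nil clean \emph{and} any two nil clean decompositions of a given element have conjugate idempotents; it is therefore a \emph{stronger} condition than nil clean, not a weaker one. Your parenthetical ``the identity is a conjugation'' shows you are thinking of decompositions $r=e+u^{-1}nu$, which is not the intended notion. With the correct definition, $(4)\Rightarrow(3)$ is the tautology (exactly as the paper records), while $(3)\Rightarrow(4)$ --- or rather $(2)\Rightarrow(4)$ --- requires genuine input: the paper invokes \cite[Corollary~2.16]{M} together with the fact that Boolean rings are conjugate nil clean to pass from $(2)$ to $(4)$.

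This same misreading surfaces in your $(5)\Rightarrow(6)$ step, where you speak of ``conjugate nil clean decompositions $r=e+u^{-1}nu$''. Under the correct definition that passage is unnecessary: conjugate nil clean already implies nil clean outright, and then your subsequent argument (nil clean with $N(R)$ an ideal forces $N(R)=J(R)$, hence $R/J(R)$ reduced and nil clean, hence Boolean, hence $R$ is $UJ$ and $UU$) goes through. Likewise in $(4)\Rightarrow(5)$ you need that conjugate nil clean implies $J(R)$ nil before invoking Remark~\ref{$UJ$ nil elements}; this follows because conjugate nil clean $\Rightarrow$ nil clean $\Rightarrow$ $J(R)$ nil (Diesl), but you should say so. Once you reverse the trivial direction to $(4)\Rightarrow(3)$ and supply the missing $(2)\Rightarrow(4)$ via \cite{M}, your argument coincides with the paper's.
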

\begin{proof}
By   \cite[Corollary 3.17]{Di},  $R$ is a nil clean ring  if and only if $R/J(R) $ is nil clean and $J(R)$ is  nil. In particular, when $R$ is a $UJ$-ring, then  $R$ is nil clean   if and only if $R$ is  $J$-clean   and  $J(R)$ is a nil ideal of $R$.  Now the equivalence of statements $(1)- (3)$ is given by Theorem \ref{thm clean} and the fact that idempotents lift modulo nil ideals.

The implication $(4)\Rightarrow (3)$ is a tautology.

Statement $(2)$ implies that $R$ is a $UJ$-ring, thus the implication $(2)\Rightarrow (4)$ is a consequence of  \cite[Corollary 2.16]{M} and the fact that Boolean rings  are conjugate nil clean.

If $R$ is  nil clean, then $J(R)$ is nil. Therefore, the equivalence $(4)\Leftrightarrow (5)$ is a consequence of Remark \ref{$UJ$ nil elements}.

Finally, one can easily check that both (2) and (6) are equivalent to $R/J(R)$ is Boolean and $J(R)=N(R)$.
\end{proof}

Comparing Theorems \ref{thm clean} and \ref{thm nil clean}, let us observe that the class of $J$-clean rings having nil Jacobson  radical is equal to the class of $UJ$ nil clean rings but it is strictly contained in the class of all nil clean rings, as the  the ring $M_2(\F_2)$ is nil clean however it is     not a  $UJ$-ring.

\end{document}